\theoremstyle{plain} \textwidth=36pc \textheight=51pc
\newtheorem{theorem}{Theorem}[section]
\newtheorem{lemma}[theorem]{Lemma}
\newtheorem{example}[theorem]{Example}
\newtheorem{proposition}[theorem]{Proposition}
\theoremstyle{definition}
\newtheorem{definition}[theorem]{Definition}
\newtheorem{remark}[theorem]{Remark}
\newtheorem{question}[theorem]{Question}
\numberwithin{equation}{section}
 \DeclareMathOperator{\Ext}{Ext}
\DeclareMathOperator{\Hom}{Hom} \DeclareMathOperator{\Tor}{Tor}
\DeclareMathOperator{\gr }{gr}
\DeclareMathOperator{\de }{d\!}
\DeclareMathOperator{\Der }{Der}
\DeclareMathOperator{\HP }{HP}
\DeclareMathOperator{\tr }{tr}
\begin{document}
\title{A note on the duality between Poisson homology and cohomology}
\author{Jiafeng L\"u, Xingting Wang and Guangbin Zhuang}
\address{L\"u: Department of Mathematics, Zhejiang Normal University, Jinhua, China}
\email{jiafenglv@gmail.com}

\address{Wang: Department of Mathematics,
University of Washington, Seattle, Washington 98195, USA}

\email{xingting@uw.edu}

\address{Zhuang: Department of Mathematics,
University of Southern California, Los Angeles 90089-2532, USA}

\email{gzhuang@usc.edu}

\begin{abstract} 
For a Poisson algebra $A$, by studying its universal enveloping algebra $A^{pe}$, we prove a duality theorem between Poisson homology and cohomology of $A$.
\end{abstract}

\subjclass[2010]{17B63, 16E40, 16S10}

\keywords{Poisson algebras, Poisson homology, Poisson cohomology, Lie-Rinehart algebras}

\maketitle

\medskip
\section{introduction}
Poisson algebras, whose deformation are intimately related to some interesting non-commutative algebras (for example, Calabi-Yau algebras), have been intensively studied recently. It is known that for a lot of Poisson algebra $A$ of dimension $\ell$, the following duality holds
\begin{equation}\label{Poincare}
\HP_n(A)\cong \HP^{\ell-n}(A)
\end{equation}
where $\HP_*(A)$ and $\HP^*(A)$ are Poisson homology and Poisson cohomology of $A$, respectively. In this paper, we prove a criterion for the duality $(\ref{Poincare})$ to hold. In fact, for a large class of Poisson algebras $A$ of dimension $\ell$, we are able to construct a right Poisson $A$-module $\omega_A$ such that 
\begin{equation}\label{general}
\HP_n(A, \omega_A)\cong \HP^{\ell-n}(A)
\end{equation} 
for any $n$. See Proposition \ref{main} for the details. The duality $(\ref{general})$ has been observed by Launois-Richard for a class of quadratic Poisson algebras \cite[Section 3.1]{LR}. The proof of our result will use the universal enveloping algebra $A^{pe}$ of a Poisson algebra $A$. 

Throughout this paper, let $k$ denote a base field of characteristic $0$. All algebras and tensor products are taken over $k$ unless otherwise stated.

\section{Universal enveloping algebras of Poisson algebras}
For a Poisson algebra $A$, one can define its universal enveloping algebra $A^{pe}$ \cite{Oh}. In \cite{U}, a constructive definition in terms of generators and relations are given. It is also observed, by several authors \cite{LWZ, T}, that $A^{pe}$ is canonically isomorphic to $V(A, \Omega_A)$, where $\Omega_A$ is the K\"ahler differential of $A$ and is equipped with the Lie-Rinehart algebra structure derived from the Poisson structure of $A$. For a detailed account of this isomorphism, one can refer to \cite[Proposition 5.7]{LWZ}.

The construction of $A^{pe}$ results in an algebra map $m: A\rightarrow A^{pe}$ and a Lie map $h: A\rightarrow A^{pe}$. The map $m$ is injective and therefore we would simply consider $A$ as a subalgebra of $A^{pe}$. Also, we will write $h_a$ for $h(a)$.

For a left Poisson module $M$ with the bilinear map $\{, \}: A\otimes M\rightarrow M$, it naturally becomes left $A^{pe}$-module such that $\{a, m\}_M=h_am$ for any $a\in A$ and $m\in M$. A similar statement holds for right Poisson modules as well.
The following proposition is well known. See \cite[Corollary 1]{U}.
\begin{proposition}
The category of left (resp. right) Poisson modules over a Poisson algebra $A$ is equivalent to the left (resp. right) module category over $A^{pe}$.
\end{proposition}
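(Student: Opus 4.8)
The plan is to produce an explicit pair of mutually inverse functors, using the presentation of $A^{pe}$ by generators and relations recalled from \cite{U}. First I would fix, once and for all, those defining relations: beyond $k$-linearity in $a$, they assert that $m$ is an algebra map ($m_a m_b = m_{ab}$, $m_1=1$), that $h$ is a Lie algebra map ($h_a h_b - h_b h_a = h_{\{a,b\}}$), and that the two maps interact through the two Leibniz-type identities $h_a m_b - m_b h_a = m_{\{a,b\}}$ and $h_{ab} = m_a h_b + m_b h_a$. The first of these reflects that $\{a,-\}$ acts as a derivation of the module multiplication, while the second reflects that $\{-,c\}$ is a derivation in its first slot. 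The whole point is that, read as statements about a module, these are \emph{precisely} the compatibility axioms in the definition of a left Poisson module.

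On objects, given a left Poisson module $M$ I would define a left $A^{pe}$-action by letting $m_a$ act as the $A$-module multiplication $x \mapsto ax$ and letting $h_a$ act as $x \mapsto \{a,x\}_M$, exactly as indicated before the statement. The verification then reduces to checking that these assignments respect each defining relation of $A^{pe}$, and each such check is nothing more than the direct transcription of one Poisson-module axiom. Conversely, given a left $A^{pe}$-module $N$, restriction along $m\colon A\to A^{pe}$ endows $N$ with an $A$-module structure, and the formula $\{a,n\}:=h_a n$ supplies the Poisson (Lie) action; the relations involving $h$ are exactly what guarantee that the Poisson-module axioms hold for this data.

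Next I would check functoriality and that the two constructions are mutually inverse. A morphism of left Poisson modules is by definition a $k$-linear map that is simultaneously $A$-linear and intertwines the brackets $\{a,-\}$; since $A^{pe}$ is generated as an algebra by the images of $m$ and $h$, such a map is the same thing as an $A^{pe}$-linear map, so the two categories have identical Hom-sets. Because neither construction alters the underlying vector space and the two actions are recovered from one another verbatim, the composites are the identity on both categories, giving in fact an isomorphism of categories. The right-module case is entirely parallel, the only care being the consistent placement of the bracket and the sign bookkeeping in the Leibniz identities.

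I expect the single load-bearing step to be the matching of the generators-and-relations presentation of $A^{pe}$ with the Poisson-module axioms: once that dictionary is written down, everything else is formal. As an alternative one could sidestep the explicit relations by invoking the isomorphism $A^{pe}\cong V(A,\Omega_A)$ recalled above together with the standard equivalence between modules over a Lie-Rinehart algebra and modules over its enveloping algebra, but the direct route keeps the Poisson-module axioms manifestly in view and is the one I would follow.
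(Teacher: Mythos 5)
Your proposal is correct and is precisely the standard generators-and-relations argument that the paper delegates to its citation of \cite[Corollary 1]{U}: the defining relations of $A^{pe}$ ($m$ an algebra map, $h$ a Lie map, plus the two Leibniz-type compatibilities) are a verbatim transcription of the Poisson-module axioms, so the two module categories are isomorphic. The paper offers no independent proof, so there is nothing further to compare.
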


Clearly, $A$ can be viewed as a left $A^{pe}$-module as well as a right $A^{pe}$-module. In fact, as a left (resp. right) $A^{pe}$-module, $A$ is isomorphic to the quotient of $A^{pe}$ by the left (resp. right) ideal generated by elements $h_a$ where $a\in A$.

The advantage of identifying $A^{pe}$ with $V(A, \Omega_A)$ is that now we can use some standard results from the theory of Lie-Rinehart algebras. For example, the algebra $V(A, \Omega_A)$ carries a filtration which naturally passes to $A^{pe}$ via the canonical isomorphism. The filtration $\{F_n\}_{n\ge 0}$ is such that $A\subset F_0 A^{pe}$ and $h_x\in F_1A^e$ for any $x\in A$. Now the theorem \cite[Theorem 3.1]{Ri} says the following.

\begin{proposition}\label{PBW}
Let $A$ be a Poisson algebra. If the K\"ahler differential $\Omega_A$ is a projective $A$-module, then there is an $A$-algebra isomorphism 
\begin{equation}
S_A(\Omega_A)\cong \gr_F A^{pe},
\end{equation}
where $S_A(\Omega_A)$ is the symmetric $A$-algebra on $\Omega_A$.
\end{proposition}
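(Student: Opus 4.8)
The plan is to transport the statement to the universal enveloping algebra $V(A,\Omega_A)$ of the Lie-Rinehart algebra $(A,\Omega_A)$ and then invoke Rinehart's PBW theorem. Recall that $\Omega_A$ carries the Lie-Rinehart structure induced by the Poisson bracket: the anchor $\rho\colon\Omega_A\to\Der(A)$ is determined by $\rho(da)=\{a,-\}$, and the Lie bracket by $[da,db]=d\{a,b\}$, both extended $A$-linearly via the Leibniz rule. Under the canonical isomorphism $A^{pe}\cong V(A,\Omega_A)$ of \cite[Proposition 5.7]{LWZ}, the algebra map $m$ identifies $A$ with the copy of $A$ inside $V(A,\Omega_A)$, while the Lie map $h$ sends $h_a$ to $da\in\Omega_A\subset V(A,\Omega_A)$; this is consistent because the relation $h_ab-bh_a=\{a,b\}$ and the bracket $[h_a,h_b]=h_{\{a,b\}}$ on the $A^{pe}$ side match $\rho(da)(b)=\{a,b\}$ and $[da,db]=d\{a,b\}$ on the $V(A,\Omega_A)$ side.

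First I would check that this isomorphism is filtered. The filtration $\{F_n\}$ on $A^{pe}$ was defined precisely by transporting the standard Rinehart filtration on $V(A,\Omega_A)$, for which $F_0=A$ and $F_1$ is spanned by $A$ and $\Omega_A$. Since $m(A)\subset F_0$ and the $h_a\in F_1$ correspond exactly to the generating sets $A$ and $\Omega_A$ on the other side, the isomorphism carries $F_nA^{pe}$ onto the $n$-th filtered piece of $V(A,\Omega_A)$. Consequently it induces an isomorphism of graded $A$-algebras $\gr_F A^{pe}\cong\gr_F V(A,\Omega_A)$. Here one notes that $\gr_F$ is genuinely an $A$-algebra because $F_0=A$, and that the left and right $A$-actions on $\gr_1=F_1/F_0$ coincide: from $h_ba-ah_b=\{b,a\}\in F_0$ one sees that $\overline{ah_b}=\overline{h_ba}$ in $F_1/F_0$, so $\gr_1$ is the symmetric $A$-bimodule $\Omega_A$.

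It then remains to apply Rinehart's theorem. Because $\Omega_A$ is projective over $A$ by hypothesis — which is exactly the condition required in \cite[Theorem 3.1]{Ri} — the canonical graded $A$-algebra map $S_A(\Omega_A)\to\gr_F V(A,\Omega_A)$, sending a degree-one element $da$ to the symbol of $da$ in $F_1/F_0$, is an isomorphism. Composing with the previous step yields $S_A(\Omega_A)\cong\gr_F A^{pe}$. The one point needing genuine verification — and the main obstacle — is confirming that the hypotheses of Rinehart's theorem are met: that $(A,\Omega_A)$ is a Lie-Rinehart algebra over the commutative base $A$ with the anchor and bracket above, and that the projectivity of $\Omega_A$ is what guarantees the PBW map is an isomorphism and not merely a surjection. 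Once this is in place, the well-definedness and bijectivity of the symmetric-algebra map follow directly from the cited theorem, and the remaining bookkeeping is formal.
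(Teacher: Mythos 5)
Your proposal is correct and follows essentially the same route as the paper: the paper likewise identifies $A^{pe}$ with $V(A,\Omega_A)$ via the canonical filtered isomorphism and then states the result as a direct application of Rinehart's PBW theorem \cite[Theorem 3.1]{Ri}, with projectivity of $\Omega_A$ being exactly the hypothesis needed there. Your additional checks (that the isomorphism is filtered and that the left and right $A$-actions agree on $F_1/F_0$) are correct and only make explicit what the paper leaves implicit.
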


Here are some other consequences that will play essential roles in our paper. 

Let $A$ be a Poisson algebra. Consider the complex $C_*$ where $C_n=0$ for $n<0$ and $C_n=A^{pe}\otimes_A\Omega_{A/k}^n$ for $n\ge 0$. Conventionally, we take $\Omega_{A/k}^0=A$. The differential $b$ is given by
\begin{align*}
b(a_0\otimes \de a_1\de a_2\cdots \de a_{n})&=\sum_{i=1}^{n}(-1)^{i+1}a_0h_{a_i}\otimes \de a_1\de a_2\cdots\hat{\de a_i} \cdots\de a_{n}\\
&+ \sum_{1\le i<j\le n}(-1)^{i+j}a_0\otimes \de \{a_i, a_j\} \de a_1\cdots \hat{\de a_i} \cdots\hat{\de a_j} \cdots\de a_n.
\end{align*}

\begin{proposition}\label{resolution}
 Let $A$ be a Poisson algebra and suppose that $\Omega_A$ is projective over $A$. Then the complex $C_*$ defined above is a projective resolution of $A$ as a left $A^{pe}$-module.
\end{proposition}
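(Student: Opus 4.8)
The plan is to recognize $C_*$ as the Chevalley--Eilenberg--Rinehart complex of the Lie--Rinehart algebra $(A,\Omega_A)$ and to deduce exactness by passing to the associated graded, where the differential degenerates into a Koszul differential over $S_A(\Omega_A)$. First I would settle the formal points. Since $\Omega_A$ is projective over $A$, each exterior power $\Omega^n_{A/k}=\Lambda^n_A\Omega_A$ is again projective (a direct summand of an exterior power of a free module), so $C_n=A^{pe}\otimes_A\Omega^n_{A/k}$ is a direct summand of a free left $A^{pe}$-module, hence projective. That $b^2=0$ is the standard Chevalley--Eilenberg--Rinehart computation, using the Leibniz rule for $\de$ and $\{\,,\}$, the Jacobi identity, and the Lie--Rinehart relations $h_{\{a,b\}}=h_ah_b-h_bh_a$ and $h_{ab}=ah_b+bh_a$. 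Finally, for $n=1$ the second sum is empty and $b(u\otimes\de a)=uh_a$ for $u\in A^{pe}$, so the cokernel of $b\colon C_1\to C_0=A^{pe}$ is $A^{pe}$ modulo the left ideal generated by all $h_a$, which the excerpt identifies with $A$; this produces the augmentation $\varepsilon\colon C_0\to A$ and shows $H_0(C_*)\cong A$.

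The substance of the argument is exactness in positive degrees, which I would obtain by filtering $C_*$. Using the filtration $\{F_pA^{pe}\}$, set $F_pC_n=F_{p-n}A^{pe}\otimes_A\Omega^n_{A/k}$, with the convention $F_qA^{pe}=0$ for $q<0$; then the filtration is exhaustive and satisfies $F_pC_n=0$ for $p<n$, so it is bounded below. The key bookkeeping is that $b$ respects this filtration and that its two sums behave differently on the associated graded: in the first sum $a_0h_{a_i}\in F_{p-n+1}A^{pe}=F_{p-(n-1)}A^{pe}$, so this sum preserves the index $p$, whereas in the second sum $a_0\in F_{p-n}A^{pe}=F_{(p-1)-(n-1)}A^{pe}$, so that sum strictly lowers $p$ and therefore vanishes on $\gr_F$. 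Hence on $\gr_FC_*$ only the first sum survives, and under the isomorphism $\gr_FA^{pe}\cong S_A(\Omega_A)$ of Proposition \ref{PBW}, which carries the symbol of $h_a$ to $\de a$, the induced differential becomes
\[
\bar b\,(\xi\otimes\de a_1\wedge\cdots\wedge\de a_n)=\sum_{i=1}^{n}(-1)^{i+1}(\xi\cdot\de a_i)\otimes\de a_1\wedge\cdots\wedge\widehat{\de a_i}\wedge\cdots\wedge\de a_n,\qquad \xi\in S_A(\Omega_A),
\]
which is precisely the Koszul differential of $S_A(\Omega_A)\otimes_A\Lambda^\bullet_A\Omega_A$.

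It then remains to note that this Koszul complex is a resolution of $A=S_A(\Omega_A)/(\Omega_A\cdot S_A(\Omega_A))$: when $\Omega_A$ is finitely generated projective it is locally free, and exactness may be checked after localizing at each prime, where the complex reduces to the classical Koszul resolution over a polynomial ring. Thus $\gr_FC_*$ is exact in positive degrees with homology $A$ in degree $0$. A standard filtered-complex (spectral sequence) argument, legitimate because the filtration is bounded below and exhaustive, then lifts this to $C_*$ itself, showing that $C_*\to A\to0$ is exact and hence that $C_*$ is a projective resolution of $A$.

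I expect the main obstacle to lie in the second paragraph: fixing the precise filtration shift $F_pC_n=F_{p-n}A^{pe}\otimes_A\Omega^n_{A/k}$ and verifying carefully that the bracket sum strictly decreases the filtration degree, so that the associated graded differential is exactly the Koszul differential with no surviving correction terms. Once this is in place, the projectivity of the terms, the identification of $H_0$, and the transfer of exactness from $\gr_FC_*$ to $C_*$ are routine.
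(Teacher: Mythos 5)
Your argument is correct, but it is worth noting that the paper does not actually prove this proposition: its entire proof is the citation ``This is [Ri, Lemma 4.1]'', i.e.\ Rinehart's original result for Lie--Rinehart algebras applied to $(A,\Omega_A)$. What you have written is, in substance, a self-contained reconstruction of Rinehart's proof --- filter $C_*$ by total degree, use the PBW isomorphism $\gr_F A^{pe}\cong S_A(\Omega_A)$ of Proposition \ref{PBW} to identify the associated graded complex with the Koszul complex $S_A(\Omega_A)\otimes_A\Lambda^\bullet_A\Omega_A$, and run the convergent spectral sequence of the bounded-below exhaustive filtration. Two small points deserve explicit mention if you want the write-up airtight. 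First, for $F_pC_n=F_{p-n}A^{pe}\otimes_A\Omega^n_{A/k}$ to be an honest filtration by submodules with $\gr_FC_n\cong(\gr_FA^{pe})\otimes_A\Omega^n_{A/k}$ you should invoke the flatness of $\Omega^n_{A/k}$ (which follows from projectivity of $\Omega_A$), so that the maps $F_qA^{pe}\otimes_A\Omega^n\to A^{pe}\otimes_A\Omega^n$ are injective. Second, your localize-at-primes argument for exactness of the Koszul complex assumes $\Omega_A$ is finitely generated, which the proposition as stated does not; this is harmless in the paper's affine applications, and in general one can instead observe that the Koszul complex of a direct summand $P$ of a free module $F$ is a direct summand, as a complex, of the Koszul complex of $F$ (split off the summand of internal $Q$-degree zero in $K(P)\otimes_AK(Q)$), reducing to the free case. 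With those points added, your proof is complete and has the advantage over the paper of being self-contained.
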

\begin{proof}
This is \cite[Lemma 4.1]{Ri}.
\end{proof}
Therefore, we get the following proposition, which to our knowledge is first explicitly spelled out in \cite{H}. 
\begin{proposition}\label{Poissonext}
Let $A$ be a Poisson algebra and suppose that $\Omega_A$ is projective over $A$. Let $M$ be a left Poisson module and $N$ a right Poisson module. Then
\begin{equation}
\HP^*(A, M)\cong \Ext^*_{A^{pe}}(A, M),
\end{equation}
and 
\begin{equation}
\HP_*(A, N)\cong \Tor_*^{A^{pe}}(N, A),
\end{equation}
where $\HP^*(A, M)$ is the Poisson cohomology with coefficients in $M$ and $\HP_*(A, N)$ is the Poisson homology with coefficients in $N$.
\end{proposition}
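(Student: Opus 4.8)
The plan is to feed the projective resolution $C_*$ of Proposition \ref{resolution} into the definitions of the derived functors and then recognize the resulting complexes as the standard Poisson (co)chain complexes. Since $\Omega_A$ is projective over $A$, Proposition \ref{resolution} gives a projective resolution $C_* \to A$ of $A$ as a left $A^{pe}$-module with $C_n = A^{pe}\otimes_A \Omega^n_{A/k}$. Hence by the very definition of derived functors, $\Ext^*_{A^{pe}}(A, M)$ is the cohomology of $\Hom_{A^{pe}}(C_*, M)$ and $\Tor_*^{A^{pe}}(N, A)$ is the homology of $N\otimes_{A^{pe}} C_*$. This is the only place the projectivity hypothesis enters: it guarantees that $C_*$ is a genuine projective resolution and so may be used to compute $\Ext$ and $\Tor$.

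Next I would apply the extension--restriction of scalars adjunction along the algebra map $m\colon A\to A^{pe}$. Because $C_n = A^{pe}\otimes_A \Omega^n_{A/k}$ is induced from the left $A$-module $\Omega^n_{A/k}$, the standard adjunction (induction is left adjoint to restriction) yields a natural isomorphism
\[
\Hom_{A^{pe}}\bigl(A^{pe}\otimes_A \Omega^n_{A/k},\, M\bigr)\cong \Hom_A\bigl(\Omega^n_{A/k},\, M\bigr),
\]
where on the right $M$ is viewed as a left $A$-module via $m$. On the homological side, using $N\otimes_{A^{pe}} A^{pe}\cong N$ one gets
\[
N\otimes_{A^{pe}}\bigl(A^{pe}\otimes_A \Omega^n_{A/k}\bigr)\cong N\otimes_A \Omega^n_{A/k}.
\]
Thus the complexes computing $\Ext$ and $\Tor$ become $\Hom_A(\Omega^\bullet_{A/k}, M)$ and $N\otimes_A \Omega^\bullet_{A/k}$, which are precisely the modules underlying Poisson cohomology and Poisson homology.

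The remaining, and main, step is to verify that the differentials transported through these isomorphisms coincide with the standard Poisson differentials. Concretely I would trace the differential $b$ term by term through the adjunctions: the first sum, built from the operators $h_{a_i}$, becomes the coefficient action $\{a_i,-\}_M$ on the cohomology side (respectively the right-module action on the homology side), since by construction $h_a$ acts on any Poisson module as $\{a,-\}$; the second sum, built from $\de\{a_i,a_j\}$, lives entirely over $A$ and passes through unchanged, reproducing the Lie--Rinehart (Chevalley--Eilenberg) bracket term associated to the Lie--Rinehart structure on $\Omega_A$. Checking the signs and reindexing is routine bookkeeping, but it is where essentially all the content lies; the adjunction isomorphisms themselves are formal. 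Once the differentials are matched, $\Hom_A(\Omega^\bullet_{A/k}, M)$ is literally the Poisson cochain complex and $N\otimes_A \Omega^\bullet_{A/k}$ the Poisson chain complex, giving $\HP^*(A,M)\cong \Ext^*_{A^{pe}}(A,M)$ and $\HP_*(A,N)\cong \Tor_*^{A^{pe}}(N,A)$.
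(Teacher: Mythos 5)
Your proposal is correct and follows exactly the route the paper intends: the paper gives no written proof but derives the proposition directly from the resolution $C_*$ of Proposition \ref{resolution} (citing Huebschmann), which is precisely your argument of computing $\Ext$ and $\Tor$ with $C_*$, applying the induction--restriction adjunction, and matching the transported differential with the Poisson (co)chain differential. You correctly identify that the only nontrivial content is the verification that the two summands of $b$ become the coefficient-action term and the Lie--Rinehart bracket term, respectively.
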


In particular, if $M=A$ (resp. $N=A$), then $\HP^*(A, M)$ (resp. $\HP_*(A, N)$) is simply denoted by $\HP^*(A)$ (resp. $\HP_*(A))$ and called the Poisson cohomology (resp. Poisson homology) of $A$. However, in light of the previous proposition, one needs not to worry too much about those definitions, at least when $\Omega_A$ is projective over $A$, since they are just $\Ext$ and $\Tor$ which we assume most readers have a solid understanding on.

The rest of the section will be devoted to some details on the Lie-Rinehart algebra structure on $\Omega_A$. First, let's recall the definition.

\begin{definition}\label{LR}
Let $R$ be a commutative ring with identity, $A$ a commutative $R$-algebra and $L$ a Lie algebra over $R$. The pair $(A, L)$ is called a {\it Lie-Rinehart algebra} over $R$ if $L$ is a left $A$-module and there is an {\it anchor map} $\rho: L\rightarrow \Der_R(A)$, which is an $A$-module and a Lie algebra morphism, such that the following relation is satisfied,
\begin{equation}
[\xi, a\cdot\zeta]= a\cdot [\xi, \zeta]+ \rho(\xi)(a)\cdot \zeta,
\end{equation}
for any $a\in A$ and $\xi, \zeta\in L$.
\end{definition}

For simplicity, we will use $\xi(a)$ for $\rho(\xi)(a)$ where $\xi\in L$ and $a\in A$. In this paper, we would always assume that $R$ is the ground field $k$.  The following example is the one that we are mainly interested \cite[Theorem 3.8]{H}.
\begin{example}\label{differential}
{\rm Let $A$ be a Poisson algebra over $k$ and $\Omega_A$ its K\"ahler differentials. Then the pair $(A, \Omega_A)$ becomes a Lie-Rinehart algebra over $k$ where the Lie bracket on $\Omega_A$ is given by 
\begin{equation}
[a df, bdg]= abd\{f, g\}+ a\{f, b\}dg-b\{g, a\}df,
\end{equation}
and the anchor map sends $df$ to $\{f, \cdot \}$.}
\end{example}

Now suppose that $A$ is an affine Poisson algebra and that $\Omega_A$ is free over $A$ of rank $\ell$ (which is equal to the Krull dimension of $A$) with a basis $\{\de x_1, \cdots, \de x_\ell\}$. Then for any $y\in A$, there is a unique matrix $Y=(Y_{ij})\in M_\ell(A)$ such that 
$$[\de y, \de x_i]=\sum_j Y_{ij} \de x_j$$
for any $i$. Define the trace of $\de y$ to be
\begin{equation}\label{trace}
\tr (\de y)= \tr Y.
\end{equation}

\section{A duality between Poisson homology and cohomology}
In this section, we are going to prove the main result of the paper. 
\begin{lemma}\label{highestdegree}
Suppose that $A$ is an affine Poisson algebra and that $\Omega_A$ is free over $A$ of rank $\ell$. Then $\Ext^i_{A^{pe}}(A, A^{pe})=0$ for $i\neq \ell$ and $\Ext^{\ell}_{A^{pe}}(A, A^{pe})\neq 0$.
\end{lemma}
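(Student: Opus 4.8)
The plan is to compute $\Ext^*_{A^{pe}}(A,A^{pe})$ directly from the projective resolution $C_*$ supplied by Proposition \ref{resolution}. Since $\Omega_A$ is free of rank $\ell$, each term $C_n=A^{pe}\otimes_A\Omega_A^n$ is a free $A^{pe}$-module of rank $\binom{\ell}{n}$, and the complex is concentrated in degrees $0\le n\le\ell$. Applying $\Hom_{A^{pe}}(-,A^{pe})$ therefore yields a cochain complex $\Hom_{A^{pe}}(C_*,A^{pe})$ living in cohomological degrees $0$ through $\ell$, whose terms are again free of finite rank. This immediately forces $\Ext^i_{A^{pe}}(A,A^{pe})=0$ for $i<0$ and for $i>\ell$, so the only content is the vanishing at the intermediate degrees $0\le i<\ell$ together with nonvanishing at $i=\ell$.

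First I would identify the dualized complex explicitly. Using the freeness of $\Omega_A$ with basis $\{\de x_1,\dots,\de x_\ell\}$, one gets dual bases and can write $\Hom_{A^{pe}}(C_n,A^{pe})\cong A^{pe}\otimes_A\Omega_A^{\ell-n}$ after identifying $\bigwedge^n$ with $\bigwedge^{\ell-n}$ via the top form $\de x_1\cdots\de x_\ell$. The key computation is to transpose the Rinehart differential $b$ and see that, up to sign and up to the trace correction \eqref{trace}, the dualized complex is isomorphic to the original complex $C_*$ read in reverse (i.e.\ a Lie--Rinehart analogue of the fact that the de Rham complex is self-dual up to a twist by the canonical module). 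Concretely, the coboundary of $\Hom_{A^{pe}}(C_*,A^{pe})$ should again be a first-order operator whose symbol in $\gr_F A^{pe}\cong S_A(\Omega_A)$ (Proposition \ref{PBW}) is exactly the Koszul differential contracting against $\de x_1,\dots,\de x_\ell$.

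The main obstacle is the vanishing statement, and the clean way to handle it is to pass to the associated graded. The filtration $F$ on $A^{pe}$ makes $C_*$ a filtered resolution, and $\gr_F$ of the dualized complex becomes the Koszul-type complex computing $\Ext^*_{S_A(\Omega_A)}(A,S_A(\Omega_A))$, where $A=S_A(\Omega_A)/(\Omega_A)$ is the quotient by the augmentation ideal. Since $S_A(\Omega_A)$ is a polynomial ring over $A$ in $\ell$ variables and $A$ is the quotient by a regular sequence of length $\ell$, this graded $\Ext$ is concentrated in the single degree $\ell$ and is nonzero there. A standard spectral sequence or filtered-complex argument then lifts this to the ungraded level: the associated graded of each $\Ext^i_{A^{pe}}(A,A^{pe})$ embeds in $\Ext^i_{\gr}(A,\gr)$, forcing vanishing for $i\neq\ell$, while the nonvanishing at $i=\ell$ follows because the top cohomology is the cokernel of the final map and survives to the associated graded. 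The delicate point to verify carefully is that the filtration is exhaustive and bounded below in each degree so that the spectral-sequence comparison is valid; this is where the affineness of $A$ and the finite rank of $\Omega_A$ are used.
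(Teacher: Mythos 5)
Your proposal is correct and its core argument---pass to the associated graded $\gr_F A^{pe}\cong S_A(\Omega_A)\cong A[y_1,\dots,y_\ell]$ via Proposition \ref{PBW}, observe that $A$ is the quotient by a regular sequence of length $\ell$ so the graded $\Ext$ is concentrated in degree $\ell$, and lift this through the standard spectral sequence of the filtration---is exactly the paper's proof. The extra material on explicitly dualizing the Rinehart complex is not needed for this lemma (though it reappears in the paper's proof of Lemma \ref{Nakauto}).
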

\begin{proof}
By Proposition \ref{PBW}, $\gr A^{pe}\cong A[y_1, \cdots, y_\ell]:= E$. Now we use the standard spectral sequence
$$ \Ext^{*}_{E}(A, E) \Rightarrow \Ext^{*}_{A^{pe}}(A, A^{pe}).$$
Now the result follows from the fact that $\Ext^i_E(A, E)=0$ for $i\neq \ell$ and $\Ext^\ell_E(A, E)\neq 0$.
\end{proof}

\begin{proposition}
Suppose that $A$ is an affine Poisson algebra and that $\Omega_A$ is free over $A$ of rank $\ell$. Let $\omega_A$ be the right $A^{pe}$-module $\Ext^{\ell}_{A^{pe}}(A, A^{pe})$.
Then for any left $A^{pe}$-module (or equivalently, left Poisson module over $A$) $N$,
\begin{equation}
\Tor^{A^{pe}}_n(\omega_A, N)\cong \Ext^{\ell-n}_{A^{pe}}(A, N).
\end{equation}
\begin{proof}
This is a consequence of Ischebeck's spectral sequence and Lemma \ref{highestdegree}. For the details, please see \cite[Theorem 1.1, Corollary 1.4]{K}.
\end{proof}
\end{proposition}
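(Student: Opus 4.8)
The plan is to bypass the spectral-sequence black box cited in the statement and instead exhibit the duality concretely at the level of a single finite projective resolution. The starting point is that, because $\Omega_A$ is free of rank $\ell$, the complex $C_*$ of Proposition \ref{resolution} is a finite projective resolution of $A$ by \emph{finitely generated} free left $A^{pe}$-modules: indeed $C_n = A^{pe}\otimes_A \Omega_{A/k}^n$ is free of rank $\binom{\ell}{n}$, and $C_n = 0$ for $n > \ell$ since $\Omega_{A/k}^n = \wedge^n \Omega_A$. So $A$ admits a finite resolution by finitely generated projectives, and this finiteness is exactly the input on which the argument rests.

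First I would form the dual complex $D^\bullet := \Hom_{A^{pe}}(C_*, A^{pe})$, a cochain complex of finitely generated projective \emph{right} $A^{pe}$-modules with $D^i$ in degree $i$ for $0 \le i \le \ell$. Its cohomology is $H^i(D^\bullet) = \Ext^i_{A^{pe}}(A, A^{pe})$, which by Lemma \ref{highestdegree} vanishes for $i \neq \ell$ and equals $\omega_A$ for $i = \ell$. Hence the augmented sequence
\[ 0 \to D^0 \to D^1 \to \cdots \to D^{\ell-1} \to D^\ell \to \omega_A \to 0 \]
is exact, so after the reindexing $Q_j := D^{\ell - j}$ the complex $Q_\bullet$ becomes a finite projective resolution of $\omega_A$ as a right $A^{pe}$-module. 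This is the structural heart of the argument: the vanishing of the lower $\Ext$ groups is precisely what turns the dualized resolution of $A$ into a genuine resolution of $\omega_A$.

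Next I would compute $\Tor^{A^{pe}}_n(\omega_A, N)$ from $Q_\bullet$, i.e. as $H_n(Q_\bullet \otimes_{A^{pe}} N)$. The key identification is the natural evaluation isomorphism $\Hom_{A^{pe}}(P, A^{pe}) \otimes_{A^{pe}} N \cong \Hom_{A^{pe}}(P, N)$, valid for every finitely generated projective left module $P$; applying it to each $P = C_{\ell - j}$ gives $Q_j \otimes_{A^{pe}} N \cong \Hom_{A^{pe}}(C_{\ell - j}, N)$ compatibly with the differentials. Thus $Q_\bullet \otimes_{A^{pe}} N$ is, up to the reindexing $j \mapsto \ell - j$, exactly the cochain complex $\Hom_{A^{pe}}(C_*, N)$ computing $\Ext^*_{A^{pe}}(A, N)$. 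Reading off homology in degree $n$ then yields $\Tor^{A^{pe}}_n(\omega_A, N) \cong H^{\ell - n}(\Hom_{A^{pe}}(C_*, N)) = \Ext^{\ell-n}_{A^{pe}}(A, N)$, which is the claim.

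The main obstacle, and the one place finiteness is genuinely needed, is the evaluation isomorphism $\Hom_{A^{pe}}(P, A^{pe}) \otimes_{A^{pe}} N \cong \Hom_{A^{pe}}(P, N)$: it fails for arbitrary projectives and requires $P$ finitely generated, which is why the hypothesis must be $\Omega_A$ free of finite rank rather than merely projective. Everything else is natural and compatible with differentials, so the only remaining checks are that the evaluation maps assemble into a morphism of complexes (routine) and that $D^\bullet$ is a resolution of $\omega_A$ (which is Lemma \ref{highestdegree}). This reorganization is precisely the bookkeeping underlying Ischebeck's spectral sequence: since $\Ext^*_{A^{pe}}(A, A^{pe})$ is concentrated in the single degree $\ell$, that spectral sequence collapses, and collapsing it by hand is exactly the computation above.
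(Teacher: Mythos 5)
Your argument is correct, and it is a genuinely more self-contained route than the paper's, which simply cites Kr\"ahmer's account of Ischebeck's spectral sequence \cite[Theorem 1.1, Corollary 1.4]{K} together with Lemma \ref{highestdegree}. What you do differently is collapse that spectral sequence by hand: you exploit the fact that $C_*$ is a \emph{finite} resolution by \emph{finitely generated} free modules (rank $\binom{\ell}{n}$, vanishing above degree $\ell$), so that the dual complex $D^\bullet=\Hom_{A^{pe}}(C_*,A^{pe})$, which by Lemma \ref{highestdegree} is exact except at the top, becomes after reindexing an explicit finite free resolution of $\omega_A$ as a right $A^{pe}$-module; the evaluation isomorphism $\Hom_{A^{pe}}(P,A^{pe})\otimes_{A^{pe}}N\cong\Hom_{A^{pe}}(P,N)$ for finitely generated projective $P$ then identifies $Q_\bullet\otimes_{A^{pe}}N$ with $\Hom_{A^{pe}}(C_*,N)$ up to reindexing, giving the duality degree by degree. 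You correctly isolate the two points where care is needed (naturality of the evaluation map in $P$, so that it commutes with the differentials, and the finite generation hypothesis without which the evaluation map is not an isomorphism), and your closing observation that this is exactly the degenerate case of Ischebeck's spectral sequence is accurate. What the paper's citation buys is brevity and a statement valid in the general framework of \cite{K}; what your version buys is a proof readable without outside references and an explicit identification of the complex computing $\Tor_*^{A^{pe}}(\omega_A,-)$, which is also what makes the later explicit description of $\omega_A$ in Lemma \ref{Nakauto} transparent.
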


Next we look closer at $\omega_A=\Ext^{\ell}_{A^{pe}}(A, A^{pe})$.
\begin{lemma}\label{Nakauto}
Let $A$ be as in Lemma \ref{highestdegree} and let $\{\de x_1, \cdots, \de x_\ell\}$ be a free $A$-basis of $\Omega_A$. Then $\omega_A=\Ext^{\ell}_{A^{pe}}(A, A^{pe})$ is isomorphic to $A^{pe}/J$ as right $A^{pe}$-modules, where $J$ is the right ideal generated by elements $h_{x_i}- \tr(\de x_i)$, $i=1, 2, \cdots, \ell$. \end{lemma}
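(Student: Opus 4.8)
The plan is to compute $\Ext^{\ell}_{A^{pe}}(A, A^{pe})$ explicitly using the projective resolution $C_*$ from Proposition \ref{resolution}. Since $\Omega_A$ is free of rank $\ell$ with basis $\{\de x_1, \dots, \de x_\ell\}$, each term $C_n = A^{pe}\otimes_A \Omega_{A}^n$ is a free right $A^{pe}$-module after applying $\Hom_{A^{pe}}(-, A^{pe})$, with $\Hom_{A^{pe}}(C_n, A^{pe})\cong A^{pe}\otimes_A \bigwedge^n(A^\ell)^*$ of rank $\binom{\ell}{n}$. In particular $\Hom_{A^{pe}}(C_\ell, A^{pe})\cong A^{pe}$ is free of rank one, generated by the dual of $1\otimes \de x_1\cdots \de x_\ell$. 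By Lemma \ref{highestdegree} the cohomology is concentrated in degree $\ell$, so $\omega_A$ is the cokernel of the top coboundary map $\partial^{\ell-1}\colon \Hom(C_{\ell-1}, A^{pe})\to \Hom(C_\ell, A^{pe})$, i.e. $\omega_A \cong A^{pe}/J$ where $J$ is the right ideal generated by the images of the $\ell$ basis elements of $\Hom(C_{\ell-1}, A^{pe})$.

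\smallskip
\noindent\textbf{Identifying the relations.} The core of the proof is to compute these $\ell$ images. First I would dualize the differential $b$ from the definition of $C_*$. The differential $b$ has two pieces: the first sums $a_0 h_{a_i}$ terms and the second sums $\de\{a_i,a_j\}$ terms. Evaluating the transpose $\partial^{\ell-1}$ on the standard basis vector $e_i^*$ dual to $1\otimes \de x_1\cdots\widehat{\de x_i}\cdots \de x_\ell$, the first sum of $b$ contributes the generator $h_{x_i}$ (up to sign, from the term where $\de x_i$ is reinserted), while the second sum of $b$, which involves brackets $[\de x_j, \de x_k]=\de\{x_j,x_k\}$ expanded in the basis, contributes precisely the trace terms. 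Concretely, the coefficient matrix arising from $b$ on the top piece is governed by the structure constants $(X_i)_{jk}$ defined by $[\de x_j,\de x_k]=\sum_m (\cdots)\de x_m$, and collapsing the wedge of length $\ell-1$ back to length $\ell$ forces a contraction that produces exactly $\tr(\de x_i)$ as defined in equation $(\ref{trace})$. So each relation should take the form $\pm(h_{x_i} - \tr(\de x_i))$.

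\smallskip
\noindent\textbf{Main obstacle and bookkeeping.} The hard part will be the sign and index bookkeeping in the second sum: one must verify that when the wedge $\de x_1\cdots\widehat{\de x_i}\cdots\de x_\ell$ is acted upon by the bracket term and the result is reexpressed in the top wedge $\de x_1\cdots \de x_\ell$, the diagonal entries $(X_j)_{ji}$ assemble into $\sum_j (X_i)_{jj}=\tr(X_i)=\tr(\de x_i)$ rather than some off-diagonal combination, and that the alternating signs $(-1)^{i+j}$ cooperate so the off-diagonal contributions cancel. I would organize this by fixing the basis vector $e_i^*$ and tracking, for each pair $\{j,k\}$, which reinsertion of a differential lands in the top wedge nontrivially; only the terms where the bracket reintroduces $\de x_i$ survive, and their coefficients are the diagonal entries of the matrix $X_i$ defining $\tr(\de x_i)$. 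Once the matrix identity is pinned down, the statement follows immediately: the cokernel of a length-one free module modulo the right submodule generated by $\{h_{x_i}-\tr(\de x_i)\}_{i=1}^\ell$ is exactly $A^{pe}/J$, and the right $A^{pe}$-module structure on $\omega_A$ matches the one on this quotient since dualization preserves the right action by right multiplication on $A^{pe}$.
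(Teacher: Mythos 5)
Your proposal follows essentially the same route as the paper: compute $\Ext^{\ell}_{A^{pe}}(A,A^{pe})$ from the Rinehart resolution $C_*$, identify $\Hom_{A^{pe}}(C_\ell,A^{pe})\cong A^{pe}$ via the generator $1\otimes \de x_1\cdots \de x_\ell$, and recognize $\omega_A$ as the cokernel of the dualized top differential, whose image is the right ideal generated by the $h_{x_i}-\tr(\de x_i)$. The paper simply asserts this last step as "a direct calculation," whereas you sketch the bookkeeping (the first sum of $b$ yielding $h_{x_i}$, the bracket terms contracting to $\tr(\de x_i)$ via the diagonal entries together with antisymmetry of the structure constants), which is correct and, if anything, more explicit than the published argument.
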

\begin{proof}
The proof is just a direct calculation. Let $C_*$ be the resolution as in Proposition \ref{resolution}. Then $\Ext^{\ell}_{A^{pe}}(A, A^{pe})$ is the cokernel of the map
\begin{equation}\label{partial}
\Hom_{A^{pe}}(C_{\ell-1}, A^{pe})\xrightarrow{\partial}\Hom_{A^{pe}}(C_{\ell}, A^{pe}).
\end{equation}
Notice that $C_{\ell}$ is a free left $A^{pe}$-module of rank $1$. In fact, given a free basis $\{\de x_1, \cdots, \de x_\ell\}$ of $\Omega_A$, $C_{\ell}$ is a free left $A^{pe}$-module with a basis $\{1\otimes \de x_1\de x_2....\de x_\ell\}$. Consequently, by sending $f\in \Hom_{A^{pe}}(C_{\ell}, A^{pe})$ to $f(1\otimes \de x_1\de x_2....\de x_\ell)\in A^{pe}$, we can identify $\Hom_{A^{pe}}(C_{\ell}, A^{pe})$ with $A^{pe}$ as right $A^{pe}$-modules. Now a direct calculation shows that the image of $\partial$ in $(\ref{partial})$ is the right ideal of $A^{pe}$ generated by elements of the form $h_{x_i}- \tr(\de x_i)$ where $i=1, 2,\cdots, \ell$. This completes the proof.
\end{proof}

\begin{question} Is there an automorphism $\nu$ on $A^{pe}$ such that $\nu$ restricts to identity on $A$ and $\omega_A\cong A^{\nu}$ as right $A^{pe}$-modules?
\end{question}

\begin{remark}\label{explicit}
Let everything be as in Lemma \ref{Nakauto}, then the canonical map $A\rightarrow A^{pe}/J=\omega_A$ induced by the algebra map $m: A\rightarrow A^{pe}$ is an isomorphism of right $A$-modules. This is an easy consequence of Proposition \ref{PBW}. Under this identification, the right $A^{pe}$-module structure on $\omega_A=A$ is given by 
\begin{equation}
mh_{x_i}= -\{x_i, m\}+m \tr(\de x_i),
\end{equation}
where $m\in A$. Or, if we think $\omega_A$ as a right Poisson module over $A$ with bilinear map $\{, \}_\omega: \omega_A\otimes A\rightarrow \omega_A$, the previous equation just translate into
\begin{equation}
\{m, x_i\}_\omega= -\{x_i, m\}+m \tr(\de x_i)
\end{equation}
This equation has been observed by Launois-Richard for a class of quadratic Poisson algebras \cite{LR}.
\end{remark}

Now we are ready to deliver the main result of this note. 
\begin{proposition}\label{main}
Retain the notation in Lemma \ref{highestdegree} and let $\{\de x_1, \cdots, \de x_\ell\}$ be a free $A$-basis of $\Omega_A$. Then 
\begin{equation}
\HP_n(A, \omega_A)\cong \HP^{\ell-n}(A)
\end{equation}
for any $n$.
Moreover, if $\tr(\de x_i)=0$ for any $i$, then 
\begin{equation}
\HP_n(A)\cong \HP^{\ell-n}(A)
\end{equation}
for any $n$.
\end{proposition}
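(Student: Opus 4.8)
The first statement is purely homological and follows by splicing together the three isomorphisms already available. The plan is to begin with $\HP_n(A, \omega_A)$, rewrite it as a $\Tor$ group via Proposition \ref{Poissonext}, apply the Ischebeck-type duality with $N=A$, and finally reinterpret the resulting $\Ext$ group as Poisson cohomology, again via Proposition \ref{Poissonext}. Concretely, since $\omega_A$ is a right Poisson module, Proposition \ref{Poissonext} gives $\HP_n(A,\omega_A)\cong \Tor^{A^{pe}}_n(\omega_A, A)$; the duality proposition stating $\Tor^{A^{pe}}_n(\omega_A, N)\cong \Ext^{\ell-n}_{A^{pe}}(A,N)$, taken with the left $A^{pe}$-module $N=A$, gives $\Tor^{A^{pe}}_n(\omega_A, A)\cong \Ext^{\ell-n}_{A^{pe}}(A,A)$; and Proposition \ref{Poissonext} once more gives $\Ext^{\ell-n}_{A^{pe}}(A,A)\cong \HP^{\ell-n}(A)$. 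Chaining these yields $\HP_n(A,\omega_A)\cong\HP^{\ell-n}(A)$ for all $n$. The hypotheses (affineness, $\Omega_A$ free of rank $\ell$) are exactly those needed to invoke each cited result, so no additional work is required for this half.

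For the second statement the task is to show that, under the hypothesis $\tr(\de x_i)=0$ for all $i$, the right Poisson module $\omega_A$ is isomorphic to $A$ equipped with its canonical right Poisson module structure, i.e. the one defining $\HP_*(A)=\HP_*(A,A)$. I would use Remark \ref{explicit}, which identifies $\omega_A$ with $A$ as a right $A$-module and records the right Poisson action through its values on the algebra generators, namely $\{m, x_i\}_\omega = -\{x_i, m\} + m\,\tr(\de x_i)$. Imposing $\tr(\de x_i)=0$ collapses this to $\{m, x_i\}_\omega = -\{x_i, m\} = \{m, x_i\}$ by antisymmetry of the Poisson bracket, which is precisely the canonical right Poisson action of $x_i$ on $A$.

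It then remains to argue that agreement on the generators $x_1,\dots,x_\ell$ forces agreement of the two right Poisson module structures on all of $A$. This is where the affine and freeness hypotheses pay off: the $x_i$ generate $A$ as a $k$-algebra, both structures share the same underlying $A$-module (namely $A$ acting by multiplication), and the Leibniz rule in the second argument determines $\{m, a\}_\omega$ for every $a\in A$ from the values $\{m, x_i\}_\omega$. Equivalently, at the level of $A^{pe}$-modules, the right actions of the generators $h_{x_i}$ agree, and these together with $A$ generate $A^{pe}$, so $\omega_A\cong A$ as right $A^{pe}$-modules. Consequently $\HP_n(A,\omega_A)\cong \HP_n(A,A)=\HP_n(A)$, and combining this with the first statement gives $\HP_n(A)\cong\HP^{\ell-n}(A)$ for all $n$. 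I expect the only genuinely delicate point to be this last identification: fixing the correct sign convention for the canonical right Poisson action and confirming that the module structure is indeed pinned down by its values on the generators. The homological duality of the first part, by contrast, is formal.
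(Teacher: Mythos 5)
Your argument is correct and follows essentially the same route as the paper: the first isomorphism is obtained by chaining Proposition \ref{Poissonext}, the Ischebeck-type duality with $N=A$, and Proposition \ref{Poissonext} again, while the second follows from the identification of $\omega_A$ with $A^{pe}/J$ (equivalently, Remark \ref{explicit}) once the trace terms vanish. Your extra care about why agreement of the right actions of the $h_{x_i}$ pins down the whole right $A^{pe}$-module structure is a point the paper leaves implicit, but it is the same argument in substance.
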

\begin{proof}
The first statement is a consequence of Proposition \ref{Poissonext} and Proposition \ref{Nakauto}.
If $\tr(\de x_i)=0$ for any $i$, by Lemma \ref{Nakauto}, $\omega_A$ is isomorphic to $A^{pe}/J$ where $J$ is the right ideal generated by $h_{x_i}$. Hence $\omega_A$ is isomorphic to $A$ as right $A^{pe}$-modules and therefore we have the second statement.
\end{proof}

\section{Examples}

In this section, we look at some examples.
\subsection{A Poisson algebra arising from a Calabi-Yau algebra} Let $A=k[x, y, z]$ with Poisson bracket given by
\begin{equation}
\{z, y\}=2xz, \quad \{z, x\}=0, \quad \{y, x\}=x^2.
\end{equation}
This Poisson algebra is studied by Berger-Pichereau in \cite{BP}, whose deformation gives a type of Calabi-Yau algebra. In the same paper, the Poisson homology of $A$ is also explicitly calculated \cite[Proposition 5.7]{BP}. Clearly, the K\"alher differential $\Omega_A$ is free over $A$ with a basis $\{\de x, \de y, \de z\}$. Also, 
\begin{align*}
[\de z, \de y]&=\de\,\{z, y\}= 2x\de z+2z\de x,\\
 [\de z, \de x]&=\de \,\{z, x\}=0,\\
  [\de y, \de x]&=\de \,\{y, x\}=2x\de x.
\end{align*}
Consequently, $\tr(\de x)=\tr(\de y)=\tr(\de z)=0$ and therefore $\HP_n(A)\cong \HP^{3-n}(A)$. In fact, as pointed out in \cite{BP}, the Poisson algebra $A$ is derived from the Poisson potential $\phi=-x^2z$ and thus the duality is automatic.

\subsection{A class of quadratic Poisson algebras} In \cite{LR}, Launois-Richard studied the following Poisson algebra $A$. As an algebra, $A$ is $k[X_1, X_2, \cdots, X_\ell]$ and the Poisson bracket is given by
\begin{equation}
\{X_i, X_j\}=a_{ij}X_iX_j
\end{equation}
where $(a_{ij})\in M_n(k)$ is an antisymmetric matrix. A direct calculation shows that 
\begin{equation}
\tr(\de X_i)= (\sum_{j=1}^\ell a_{ij})X_i
\end{equation}
Therefore, as observed in Remark \ref{explicit}, $\omega_A$ is isomorphic to $A$ as right $A$-modules and the right Poisson module structure on $\omega_A=A$ is given by 
\begin{equation}\label{quadratic}
\{m, X_i\}_\omega= -\{X_i, m\}+(\sum_{j=1}^\ell a_{ij})mX_i
\end{equation}
for any $m\in \omega_A=A$. Notice that the equation $(\ref{quadratic})$ is exactly \cite[Section 3.1(6)]{LR}. Moreover, the twisted duality $\HP_n(A, \omega_A)\cong \HP^{\ell-n}(A)$ for this particular type of Poisson algebra is given in \cite[Theorem 3.4.2]{LR}.

In fact, for this example, we can find an algebra automorphism $\nu$ on $A^{pe}$ such that $\nu|_A=id$ and $\omega_A\cong A^{\nu}$. It is the unique map given by
$$\nu(X_i)=X_i, \quad \nu(h_{X_i})=h_{X_i}+(\sum_{j=1}^\ell a_{ij})X_i$$
for any $i$. 

\end{document}